\newtheorem{lemma}{Lemma}
\newtheorem{cor}{Corollary}
\newtheorem{theorem}{Theorem}
\newcommand{\HH}{{\mathcal H}}
\renewcommand{\H}{\HH^1}
\def\dist{\mathrm{dist}\,}
\title{A self-similar infinite binary tree is a solution of Steiner problem}
\author{D. Cherkashin\footnote{Institute of Mathematics and Informatics, Bulgarian Academy of Sciences. The work is supported by the Ministry of Education and Science of Bulgaria, Scientific Programme ``Enhancing the Research Capacity in Mathematical Sciences (PIKOM)'', No. DO1-67/05.05.2022}~~and Y. Teplitskaya
\footnote{Mathematical Institute, Leiden University, Netherlands}}
\begin{document}

\maketitle

\begin{abstract}

We consider a general metric Steiner problem which is of finding a set $\mathcal{S}$ with minimal length such that $\mathcal{S} \cup A$ is connected, where $A$ is a given compact subset of a given complete metric space $X$; a solution is called Steiner tree.

Paper~\cite{paolini2015example} contains an example of a planar Steiner tree with an infinite number of branching points connecting
an uncountable set of points. We prove that such a set can have a positive Hausdorff dimension which was an open question (the corresponding tree is a self-similar fractal).

\end{abstract}

\textbf{MSC:} 49Q10, 05C63.

\textbf{Keywords:} Steiner tree problem, self-similar fractal, infinite binary tree, explicit solution.

\section{Introduction}

The Steiner problem which has various different but more or less equivalent formulations, is that of finding a set $\mathcal{S}$ 
with minimal length (one-dimensional Hausdorff measure $\H$) such that $\mathcal{S} \cup A$ is connected, where $A$ is a given compact subset of a given complete metric space $X$. 

It is shown in~\cite{paolini2013existence}, that under rather mild assumptions on the ambient space $X$ (which anyhow are true in the Euclidean plane setting) it is shown that a solution of the Steiner problem exists. 
Moreover every solution $\mathcal{S}$ having a finite length has the following properties:
\begin{itemize}
    \item $\mathcal{S} \cup A$ is compact,
    \item $\mathcal{S} \setminus A$ has at most countably many connected components, and each of the latter has strictly positive length,
    \item $\overline{\mathcal{S}}$ contains no loops (homeomorphic images of the circle $\mathbb{S}^1$),
    \item the closure of every connected component of $\mathcal{S}$ is a topological tree (a connected, locally connected
    compact set without loops) with endpoints on $A$ (so that in particular it has at most countable number
    of branching points), and with at most one endpoint on each connected component of $A$ and all the
    branching points having finite order (i.e. finite number of branches leaving them),
    \item if $A$ has a finite number of connected components, then $\mathcal{S} \setminus A$ also has finitely many connected components,
    the closure of each of which is a finite geodesic embedded graph with endpoints on $A$, and with at most
    one endpoint on each connected component of $A$,
    \item for every open set $U \subset X$ such that $A \subset U$ one has that the set $\mathcal{S}' := \mathcal{S} \setminus U$ is a subset of a finite geodesic embedded graph. Moreover, for a.e. $\varepsilon > 0$ one has that for $U = \{x : \dist(x, A) < \varepsilon\}$ the set $\mathcal{S}'$ is a finite geodesic embedded graph (in particular, it has a finite number of connected components and a finite number of branching points).
\end{itemize}

A solution of the Steiner problem is called \textit{Steiner tree}; the above properties explain such naming in the case of $A$ being a totally disconnected set.
Denote by $\mathbb{M}(A)$ the set of Steiner trees for $A$.
From now we focus on $X = \mathbb{R}^2$ (but the following properties are also hold for $\mathbb{R}^d$) and assume that $A$ is a totally disconnected set; we call the points from $A$ \textit{terminals}. In this case a geodesic is just a straight line segment. 
A combination of the last enlisted property from~\cite{paolini2013existence} with a well-known facts on Euclidean Steiner trees (see~\cite{gilbert1968steiner,hwang1992steiner}) gives the following properties.
The maximal degree (in graph-theoretic sense) of a vertex in a Steiner tree is at most $3$. Moreover, only terminals can have degree $1$ or $2$, all the other vertices have degree $3$ and are called \textit{Steiner points}. Vertices of the degree $3$ are called \textit{branching points}. The angle between any two adjacent edges of a Steiner tree is at least $2\pi/3$.

Note that a Steiner tree may be not unique, see the left-hand side of Fig.~\ref{fig:1nonunique}.
\begin{figure}
    \centering
    \includegraphics[width=\textwidth]{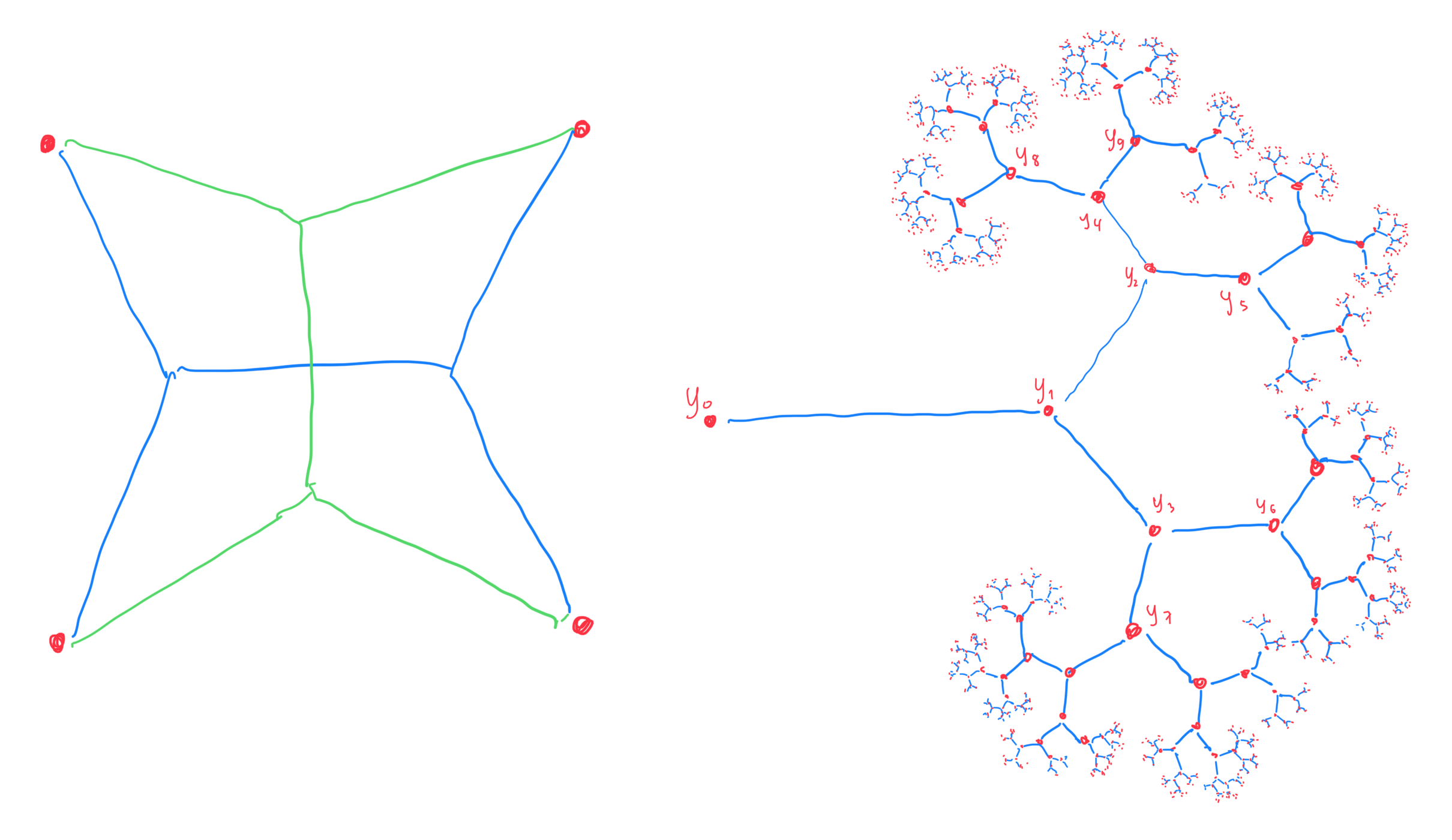}
    \caption{The left part contains two Steiner trees connecting vertices of a square; the right part provides an example of $\Sigma(\Lambda)$}
    \label{fig:1nonunique}
\end{figure}

Branching and in particular triple-branching points is a well-known phenomenon in one-dimensional shape optimization problems.
There are several papers from the last decade concerning the case of an infinite number of branching points for the various forms of the irrigation problem~\cite{morel2010regularity,marchese2016optimal,goldman2017self}. The corresponding result for the Steiner problem was proved in~\cite{paolini2015example} (see Theorem~\ref{th_steinMain0}).

If every terminal point in a Steiner tree has degree one, then it is called \textit{full}. 
In the Steiner tree problem it is reasonable to focus on full trees, since every non-full tree can be easily cut on full components.
Vice versa, it is relatively easy to glue several regular tripods (a \textit{regular tripod} is a union of three segments with a common end and pairwise angles equal to $2\pi/3$) by terminals and get a trivial example of a tree with an infinite number of branching points.

\subsection{A universal Steiner tree}

In this subsection we provide the construction of a unique Steiner tree with an infinite number of Steiner points from~\cite{paolini2015example}. 

Let $S_\infty$ be an infinite tree with vertices $y_0, y_1, y_2,\dots$ and edges given by $y_0y_1$ and $y_ky_{2k},\ y_ky_{2k+1},\ k\geq 1$. Thus, $S_\infty$ is an infinite binary tree with an additional vertex $y_0$ attached to the common parent $y_1$ of all other vertices $y_k,k\geq 2$. The goal of~\cite{paolini2015example} is to embed $S_\infty$ in the plane in such a way that the image of each finite subtree of $S_\infty$ will be the unique Steiner tree for the set of its vertices having degree 1 or 2. We define the embedding below by specifying the positions of $y_0,y_1,y_2,\dots$ on the plane.

Let $\Lambda = \{\lambda_i\}_{i=0}^\infty$ be a sequence of positive real numbers.
Define an embedding $\Sigma (\Lambda)$ of $S_\infty$ as a rooted binary tree with the root $y_0 = (0,0)$ the first descendant $y_1 = (1,0)$ and the ratio between edges of $(i+1)$-th and $i$-th levels being $\lambda_i$.
For a small enough $\{\lambda_i\}$ the set $\Sigma (\Lambda)$ is a tree, see the right-hand side of Fig.~\ref{fig:1nonunique}.

Let $A_\infty (\Lambda)$ be the union of the set of all leaves (limit points) of $\Sigma (\Lambda)$ and $\{y_0\}$.

\begin{theorem}[Paolini--Stepanov--Teplitskaya,~\cite{paolini2015example}]
\label{th_steinMain0}
A binary tree $\Sigma(\Lambda)$ is a unique Steiner tree for $A_\infty (\Lambda)$ provided by $\lambda_i < 1/5000$ and $\sum_{i=1}^\infty \lambda_i  <\pi/5040$.
\end{theorem}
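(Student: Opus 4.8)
The plan is to exploit the self-similarity of $\Sigma(\Lambda)$ together with the structure theorem for Steiner trees recalled above, reducing global optimality to a single comparison at the top branching point $y_1$ that is then iterated through all scales. First I would record the geometric facts that make the statement well posed. Since $\lambda_i<1/5000$, the level-$i$ edges have length $\ell_i=\prod_{j<i}\lambda_j$ with $\sum_i 2^i\ell_i<\infty$, so $\Sigma(\Lambda)$ is an embedded topological tree of finite length $L:=\mathcal{H}^1(\Sigma(\Lambda))$, every branching is a genuine $120^\circ$ tripod, and $A_\infty(\Lambda)$ is compact and totally disconnected. The bound $\sum_i\lambda_i<\pi/5040$ controls the accumulated turning of the edge directions: it keeps the whole tree inside a thin cone, prevents self-intersection, and --- most importantly --- guarantees that the leaves in the subtree hanging from $y_2$ are separated by a definite angular gap from those hanging from $y_3$ and from the stem towards $y_0$.

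The core is a separation-and-recursion argument. Partition the terminals as $A_\infty=\{y_0\}\sqcup A^L\sqcup A^R$, where $A^L,A^R$ are the leaves of the subtrees rooted at $y_2,y_3$; by construction each of these subtrees, together with its attachment at $y_1$, is a copy of $\Sigma(\sigma\Lambda)$ scaled by $\lambda_0$, where $\sigma$ shifts the sequence $\Lambda$. Given any competitor $\mathcal{S}$ with $\mathcal{S}\cup A_\infty$ connected and $\mathcal{H}^1(\mathcal{S})<\infty$, the cited structure theorem lets me regard $\overline{\mathcal{S}}$ as a finite-length topological tree with $120^\circ$ Steiner points. The angular separation confines $\{y_0\}$, $A^L$ and $A^R$ to three pairwise disjoint cones meeting only near $y_1$; I would show that connectivity forces $\mathcal{S}$ to connect each cluster internally down to a single common meeting point and to link that point to $y_0$, giving
\[
\mathcal{H}^1(\mathcal{S})\;\ge\;|y_0y_1|+\mathrm{St}\big(A^L\cup\{y_1\}\big)+\mathrm{St}\big(A^R\cup\{y_1\}\big),
\]
where $\mathrm{St}(\cdot)$ is the minimal connection length. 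Optimizing the position of the meeting point is a three-terminal Fermat problem solved by the regular tripod centered at $y_1$, and by self-similarity the two subproblems equal $\lambda_0\,\mathrm{St}(A_\infty(\sigma\Lambda))$. Since the shifted sequence again satisfies the hypotheses, the same inequality applies at every node, yielding the recursion $L(\Lambda)=1+2\lambda_0\,L(\sigma\Lambda)$.

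Unwinding the recursion reproduces the telescoping sum $L=1+\sum_{i\ge1}2^i\ell_i$, so $\mathcal{H}^1(\mathcal{S})\ge L$; because $2\lambda_i<2/5000$ the iteration is contractive, the infinitely many nested estimates converge, and no length is lost in passing to the limit. For uniqueness I would track the equality cases: equality at a given scale forces the meeting point to sit exactly at the prescribed $120^\circ$ position and forces $\mathcal{S}$ to split over the two cones without any shared shortcut, so $\mathcal{S}$ coincides with each finite truncation $\Sigma_n$ of $\Sigma(\Lambda)$; letting $n\to\infty$ and using that $\mathcal{S}\cup A_\infty$ is closed forces $\mathcal{S}=\Sigma(\Lambda)$.

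The main obstacle is precisely the separation step: proving that a globally optimal competitor genuinely decouples into a central tripod plus two independent subtrees, rather than saving length by routing a branch out of the $A^L$-cluster across to serve a leaf of the $A^R$-cluster or near $y_0$. This is where the quantitative hypotheses must be spent --- $\sum_i\lambda_i<\pi/5040$ keeps the three cones essentially disjoint, while $\lambda_i<1/5000$ makes each cluster's diameter negligible against its distance to the other two, so that any such cross-connection is strictly longer than the recursive optimum by a definite, scale-invariant margin. Turning this picture into uniform strict inequalities that survive summation over infinitely many scales, and checking that the constants $5000$ and $5040=7!$ are large enough to close every estimate, is the delicate and calculation-heavy part. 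An alternative, and formally cleaner, route would be to build an explicit self-similar calibration --- a unit vector field tangent to $\Sigma(\Lambda)$ with the closedness properties needed to lower-bound every competitor simultaneously --- but since calibrations for branched one-dimensional structures are notoriously hard to construct, I expect the recursive comparison to be the more reliable path.
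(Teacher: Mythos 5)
There is a genuine gap, and it sits exactly where you place your ``main obstacle'': the displayed inequality
\[
\mathcal{H}^1(\mathcal{S})\;\ge\;|y_0y_1|+\mathrm{St}\bigl(A^L\cup\{y_1\}\bigr)+\mathrm{St}\bigl(A^R\cup\{y_1\}\bigr)
\]
is not a step towards the theorem --- it essentially \emph{is} the theorem. The right-hand side is the length of one particular competitor (the stem $[y_0y_1]$ glued to optimal trees for the two sub-problems anchored at $y_1$), so the inequality asserts that this competitor is optimal, which after unwinding the recursion is the statement to be proved. Nothing in your sketch forces an arbitrary finite-length competitor to pass through the specific point $y_1$, to meet the three cones in a single point, or to decouple into three pieces each serving one cluster: a topological tree connecting three clusters can have its branching anywhere, can have several crossings of any separating curve, and the sub-functional $p\mapsto\mathrm{St}(A^L\cup\{p\})$ is not of the form $|p-q|$ for a fixed $q$, so ``optimizing the meeting point'' is not a three-terminal Fermat problem. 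Cut-and-paste across a separating curve only yields $\mathcal{H}^1(\mathcal{S})\ge\sum_i\mathcal{H}^1(\mathcal{S}_i)$ for sub-problems whose boundary data is the (unknown) trace of $\mathcal{S}$ on that curve, not sub-problems anchored at the prescribed point $y_1$; closing that mismatch is the entire difficulty, and your proposal leaves it as ``I would show that\dots''. The uniqueness paragraph inherits the same gap, since tracking equality cases presupposes the unproved decomposition.

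For comparison: the present paper does not prove Theorem~\ref{th_steinMain0} at all --- it is quoted from~\cite{paolini2015example}, where the proof proceeds by quantitative stability estimates (``a general estimation of a difference after a small perturbation''), i.e.\ hard analysis rather than a clean recursion. The paper's own Theorem~\ref{newMainT} shows how delicate the decoupling step is even in the fully symmetric constant-$\lambda$ case: Lemma~\ref{firstlemma} has to cut along carefully chosen segments $[Z_lZ_r]$, $[V_lV_r]$ lying on the sides of an equilateral triangle so that the barycentric-coordinate identity makes the cut-and-paste inequality~\eqref{infactequal} an equality for a \emph{suitably constructed} competitor; this yields only the existence of a symmetric minimizer, not a lower bound on every competitor through a prescribed branching point, and the authors explicitly state they cannot recover uniqueness this way. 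So the recursion you propose would need, at every scale, a new argument pinning the competitor to $y_k$ with a margin summable over infinitely many levels; supplying that argument is the actual content of the theorem.
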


The following corollary explains why a full binary Steiner tree is universal, i.e. it contains a subtree with a given combinatorial structure.

\begin{cor}
In the conditions of Theorem~\ref{th_steinMain0} each connected closed subset $S$ of $\Sigma(\Lambda)$ has a natural tree structure. Moreover, every such an $S$ is the unique Steiner tree for the set $P$ of the vertices with the degree $1$ and $2$ of $S$.
\label{coruniquesubtree}
\end{cor}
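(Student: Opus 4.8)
The plan is to prove Corollary~\ref{coruniquesubtree} by reducing the statement about an arbitrary connected closed subset $S$ to the finite-subtree uniqueness already granted by Theorem~\ref{th_steinMain0}, together with a compactness/exhaustion argument. Let me think through what needs to happen.

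First, structure of $S$. $S$ is a connected closed subset of the embedded tree $\Sigma(\Lambda)$. Since $\Sigma(\Lambda)$ is a topological tree (no loops) and $S$ is connected, $S$ is itself a topological tree. Its vertices of degree $1$ and $2$ — call this set $P$ — are the "boundary/terminal" vertices that $S$ inherits; degree-$3$ vertices are the Steiner branching points lying inside $S$. I'd want to observe that $P$ is compact (closed subset of the compact $\overline{\Sigma(\Lambda)}$) and totally disconnected, so the Steiner problem for $P$ is well-posed and $S$ is a competitor: $S \supseteq P$ (the degree-$\le 2$ vertices lie in $S$) and $S$ is connected, so $S$ connects $P$. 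The whole content is that $S$ is *the* minimizer.

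Here is the approach I'd take. Fix any finite subtree $T$ of $S$ obtained by truncating at some finite level, i.e. take the vertices of $\Sigma(\Lambda)$ up to level $n$ that lie in $S$ and the induced subtree. By Theorem~\ref{th_steinMain0} (or more precisely its finite-subtree consequence, which is the actual statement embedded in the construction), each such finite $T$ is the unique Steiner tree for the set of its degree-$1$-and-$2$ vertices. The plan is to exhaust $S$ by such finite subtrees $T_n \uparrow S$, show $\mathcal{H}^1(T_n) \to \mathcal{H}^1(S)$, and then use the lower-semicontinuity of length together with the uniqueness at each finite stage to upgrade to minimality of $S$ for $P$. Concretely: suppose $S'$ is any competitor connecting $P$ with $\mathcal{H}^1(S') \le \mathcal{H}^1(S)$. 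I would intersect $S'$ with a large ball, or use the structure theorem bullet (the one about $\mathcal{S}\setminus U$ being a finite graph) to localize, and compare $S'$ against the finite approximants $T_n$, whose boundary vertices it must connect; since each $T_n$ is the *unique* minimizer for its terminals, $S'$ must coincide with $T_n$ on the corresponding region, and passing to the limit forces $S' = S$. The natural tree structure claim then follows since $S'=S$ is forced to be exactly the subtree.

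The main obstacle I expect is the passage to the limit: converting uniqueness of finite Steiner trees into uniqueness of the infinite object $S$. The finite pieces $T_n$ have terminal sets $P_n$ that are not subsets of $P$ (the truncation introduces artificial degree-$1$ vertices at the cut points), so one cannot directly say "$S'$ connects $P_n$". I would handle this by choosing the truncation along the $\varepsilon$-neighborhood structure guaranteed by the last bullet of~\cite{paolini2013existence}: for a.e. $\varepsilon$, $S \setminus \{x : \dist(x,P)<\varepsilon\}$ is a finite geodesic graph, and I'd compare competitors on these finite regions, using that the geometry of $\Sigma(\Lambda)$ (angles $\ge 2\pi/3$, contraction ratios $\lambda_i$ small) forces any minimizer to agree with $\Sigma(\Lambda)$ locally, then let $\varepsilon \to 0$. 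The quantitative smallness hypotheses $\lambda_i < 1/5000$ and $\sum \lambda_i < \pi/5040$ are exactly what make the local uniqueness rigid enough to survive this limiting procedure, so I would lean on the same estimates that prove Theorem~\ref{th_steinMain0} rather than reproving them. The remaining steps — that $P$ is totally disconnected and compact, and that $S$ is a genuine competitor — are routine topology on the embedded tree.
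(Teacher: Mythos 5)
Your proposal has a genuine gap, and it also misses the short argument that makes this corollary essentially immediate. The gap is twofold. First, you invoke ``the finite-subtree consequence'' of Theorem~\ref{th_steinMain0} --- that each finite truncation $T_n$ of $S$ is the unique Steiner tree for its degree-$1$-and-$2$ vertices --- but that statement is not what Theorem~\ref{th_steinMain0} asserts (the theorem only speaks of the full tree $\Sigma(\Lambda)$ and the terminal set $A_\infty(\Lambda)$); a finite subtree is itself a connected closed subset of $\Sigma(\Lambda)$, so what you are assuming is a special case of the very corollary you are trying to prove. Second, even granting that input, the passage to the limit is the entire difficulty, and your sketch does not carry it out: as you yourself note, the truncated terminal sets $P_n$ contain artificial cut points not in $P$, so a competitor $S'$ for $P$ need not connect $P_n$ and cannot be compared against $T_n$ directly; the proposed remedy (``use the $\varepsilon$-neighborhood structure \ldots{} and lean on the same estimates that prove Theorem~\ref{th_steinMain0}'') amounts to redoing the hard stability analysis of~\cite{paolini2015example} rather than deducing the corollary from the theorem, and no concrete inequality is produced that would force $S'=S$ in the limit.

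The paper's proof avoids all of this with a one-step exchange argument. Suppose $S'\neq S$ connects $P$ and $\H(S')\leq\H(S)$. Every point at which the rest of $\Sigma(\Lambda)$ attaches to $S$ has degree at most $2$ in $S$, hence lies in $P$; since $S'$ connects $P$, the set $(\Sigma(\Lambda)\setminus S)\cup S'$ still connects $\{y_0\}\cup A_\infty$, and its length is at most $\H(\Sigma(\Lambda))$. If the inequality is strict this contradicts the minimality of $\Sigma(\Lambda)$ in Theorem~\ref{th_steinMain0}; if it is an equality it produces a second Steiner tree for $A_\infty(\Lambda)$, contradicting uniqueness. No exhaustion, no limit, and no quantitative estimates are needed --- the uniqueness claimed for the subtree is inherited directly from the uniqueness for the whole tree. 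I would encourage you to look for this kind of cut-and-paste comparison first whenever a minimizer is claimed to restrict to minimizers.
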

\begin{proof}
    Let $S\subset \Sigma(\Lambda)$ and $P\subset S$ satisfy the conditions of the corollary. The fact that $S$ is a tree is straightforward. Let $S'\neq S$ be any Steiner tree for $S$ and assume that $\H(S') \leq \H(S)$. Then it is clear that $\H((\Sigma(\Lambda)\setminus S)\cup S') \leq \H(\Sigma(\Lambda))$, but on the other hand $\{y_0\}\cup A_\infty \subset (\Sigma(\Lambda)\setminus S)\cup S'$, which contradicts to the minimality or the uniqueness in Theorem~\ref{th_steinMain0}.
\end{proof}

After proving Theorem~\ref{th_steinMain0} the authors of~\cite{paolini2015example} asked the following question:

\begin{quote}
Our proof requires that the sequence $\{ \lambda_j \}$ vanish rather quickly (in fact, at least
be summable). It is an open question if in the case of a constant sequence $\lambda _j = \lambda$ (with $\lambda > 0$ small enough)
the same construction still provides a Steiner tree. This seems to be quite interesting since the resulting tree
would be, in that case, a self-similar fractal.
\end{quote}

The result of this paper is an affirmative answer.

\begin{theorem}
A binary tree $\Sigma(\Lambda)$ is a Steiner tree for $A_\infty (\Lambda)$ provided by a constant sequence $\lambda_i = \lambda < \frac{1}{300}$.
\label{newMainT}
\end{theorem}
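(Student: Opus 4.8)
The plan is to prove minimality of $\Sigma(\Lambda)$ by a compactness-plus-locality argument, leveraging the already-established summable case (Theorem~\ref{th_steinMain0}) rather than redoing the delicate angle estimates from scratch. The key difficulty is that for a constant sequence $\lambda_i = \lambda$ the total ``twisting'' $\sum_i \lambda_i$ diverges, so the global summability hypothesis of Theorem~\ref{th_steinMain0} fails; the whole point is that self-similarity must substitute for summability. I would therefore exploit the self-similar structure: $\Sigma(\Lambda)$ with constant $\lambda$ decomposes into a root edge $y_0 y_1$ together with two rescaled-and-rotated copies of the whole tree hanging off $y_1$, each scaled by $\lambda$. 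This renormalization identity is the engine of the entire argument.

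First I would set up the variational/competitor framework: let $\Sigma = \Sigma(\Lambda)$ and suppose $\mathcal{S}'$ is any connected competitor with $\mathcal{S}' \supseteq A_\infty(\Lambda)$ and $\H(\mathcal{S}') < \H(\Sigma)$; the goal is a contradiction. The natural tool is a \emph{calibration} or a \emph{localized comparison}. Because the regularity theory quoted from~\cite{paolini2013existence} guarantees that outside any neighborhood $U \supset A$ a solution is a finite embedded geodesic graph with all angles $\geq 2\pi/3$ and all Steiner points of degree three, I would try to show that $\Sigma$ satisfies these necessary local conditions \emph{everywhere} (this is essentially geometric: verify that at every branching point $y_k$ the three edges meet at angles $2\pi/3$, which the construction should force once $\lambda < 1/300$ keeps the edges from bending too much). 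The content beyond local optimality is the global statement, and here I would use self-similarity to bootstrap: if the theorem held for all trees with $\sum \lambda_i$ finite, one approximates the constant-$\lambda$ tree by truncations $\Sigma_n$ (constant up to level $n$, then summably decaying) whose minimality is known, and pass to a limit. The main work is controlling that the limit of minimizers is a minimizer, i.e. a $\Gamma$-convergence / lower-semicontinuity statement for $\H^1$ under Hausdorff convergence of connected sets, which is available in this setting precisely because length is lower semicontinuous along the Golab-type theorems underlying~\cite{paolini2013existence}.

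Concretely, the renormalization step I expect to carry the proof is a self-consistency inequality. Write $\ell(\lambda) := \H(\Sigma(\Lambda))$ for the constant sequence; self-similarity gives $\ell(\lambda) = 1 + 2\lambda\,\ell(\lambda)$, so $\ell(\lambda) = 1/(1-2\lambda)$, finite exactly when $\lambda < 1/2$, which $\lambda < 1/300$ comfortably ensures. Now suppose a competitor $\mathcal{S}'$ saves length $\delta > 0$ over $\Sigma$. I would localize the saving to one of the two self-similar subtrees by a pigeonhole/averaging argument: at least one of the two $\lambda$-scaled copies must be improved by a proportional amount, and rescaling that copy back by $1/\lambda$ produces an improved competitor for the \emph{same} problem $A_\infty(\Lambda)$, with a strictly larger relative saving. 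Iterating drives the relative saving to values exceeding any fixed bound, contradicting the trivial a priori estimate that no connected set spanning $A$ can be shorter than, say, the diameter of $A$. The crux is making this localization rigorous: one must ensure the saving genuinely concentrates in a single rescaled subtree rather than being smeared as a boundary effect near $y_1$, and this is where the angle condition $2\pi/3$ and the smallness $\lambda < 1/300$ enter, guaranteeing the two subtrees are geometrically well-separated and that the gluing at $y_1$ costs nothing to re-optimize.

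The step I expect to be the genuine obstacle is establishing that local optimality (correct angles, degree-three branching, the finite-graph structure away from $A$) together with self-similarity actually \emph{implies} global minimality, since in general a tree meeting all first-order Steiner conditions need not be a global minimizer — the construction in~\cite{paolini2015example} needed summability precisely to rule out large-scale rerouting. I would attack this by building an explicit \emph{calibration}: a $1$-Lipschitz vector field (or, in the metric setting, a suitable Kantorovich-type potential) that is tangent to $\Sigma$ along every edge and has norm at most one elsewhere, whose existence certifies $\Sigma$ as a mass-minimizing current and hence as a Steiner tree. Self-similarity should let one define the calibration recursively on the two subtrees and patch it across $y_1$, with $\lambda < 1/300$ providing the slack needed for the patching to remain $1$-Lipschitz. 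If a clean calibration proves elusive, the fallback is the truncation-and-limit argument above, accepting a weaker explicit constant.
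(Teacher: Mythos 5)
Your proposal does not close the argument: each of the three routes you sketch stalls at a point that is in fact the whole difficulty, and one of them is unworkable as stated. The truncation-and-limit route fails at the first step: Theorem~\ref{th_steinMain0} is not a statement about ``all trees with $\sum_i \lambda_i$ finite'' but requires the quantitative bound $\sum_i \lambda_i < \pi/5040$, so no tree that agrees with the constant-$\lambda$ tree up to level $n > \pi/(5040\lambda)$ can satisfy its hypotheses; there is no family of admissible approximants to pass to the limit with. The renormalization/pigeonhole route has the right flavour (the paper does exploit self-similarity to iterate), but the step you yourself flag as ``the crux'' --- showing that a competitor for $A_\infty$ decomposes into a piece near the root plus two competitors for the rescaled subproblems, so that a global saving can be localized into one scaled copy --- is precisely the missing content, and nothing in your sketch supplies it: a competitor need not pass through or even near $y_1$, and the terminal set of one subtree is not separated from the rest by anything you have constructed. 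The calibration route is purely aspirational; no field is exhibited, and Steiner trees are notoriously resistant to calibration arguments (one needs currents with coefficients in groups even to formulate the problem that way, and no calibration is known for configurations of this type). Finally, verifying the first-order conditions ($2\pi/3$ angles, degree-three branching) along $\Sigma(\Lambda)$ is, as you concede, nowhere near sufficient for global minimality.

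What the paper actually does is different and worth internalizing: it never assumes a better competitor exists in order to derive a contradiction. Instead it uses the reflection symmetries of the terminal set. Lemma~\ref{firstlemma} shows --- by cutting any minimizer along two short auxiliary segments $[Z_lZ_r]$, $[V_lV_r]$ placed on the sides of an equilateral triangle, so that the middle piece is a regular tripod free to slide (its length being independent of the contact point, by the barycentric identity) and the two outer pieces can be replaced by a mirror-symmetric pair --- that \emph{some} Steiner tree for $\{Y_1\}\cup b_1\cup c_1$ is symmetric about the axis $l_1$. Lemma~\ref{vert} then shows, by averaging two reflected competitors $\mathcal{S}_1,\mathcal{S}_2$ with $\H(\mathcal{S})=\tfrac12(\H(\mathcal{S}_1)+\H(\mathcal{S}_2))$, that some Steiner tree actually contains the root segment and branches at $y_1$. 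Self-similarity enters only at this point: the two subproblems hanging off $y_1$ are $\lambda$-scaled copies of the original, so the two lemmas apply again, recovering $\Sigma(\Lambda)$ edge by edge and showing that every Steiner tree for $A_\infty$ has length at least $\sum_{i=0}^{k-1}(2\lambda)^i$ for every $k$, hence at least $\H(\Sigma(\Lambda))$. These symmetrization lemmas are exactly the substitute for the localization step you could not supply.
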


Clearly, in the conditions of Theorem~\ref{newMainT}
\[
\H(\Sigma(\Lambda)) = \sum_{i=0}^\infty (2\lambda)^i = \frac{1}{1 - 2\lambda}
\]
and the Hausdorff dimension of $A_\infty$ is $-\frac{\ln 2}{\ln \lambda}$.

The set of descendants of every vertex $y_k$ of $\Sigma(\Lambda)$ has an axis of symmetry containing $y_k$.
The idea of the proof of Theorem~\ref{newMainT} is to progressively show that there is a Steiner tree for $A_\infty$ which has more and more such symmetries and then to pass to a limit. 

In fact the proof uses soft analysis in opposite to the previous one, which used hard analysis. The proof uses symmetry arguments instead of stability arguments (the proof of Theorem~\ref{th_steinMain0} is based on a general estimation of a difference after a small perturbation). 
In fact, the proof of Theorem~\ref{th_steinMain0} allows us to use different $\lambda_i$ for different branches, which completely breaks the symmetries used in the proof of Theorem~\ref{newMainT}.

Another weakness by comparison with Theorem~\ref{th_steinMain0} is that we are not able to show the uniqueness of a Steiner tree.
Summing up, to prove or to disprove that a full binary tree with uniformly bounded ratios of consecutive edges is a unique Steiner tree for its terminals is a challenging open question.

\section{Proof of Theorem~\ref{newMainT}}

Let $\lambda < \frac{1}{300}$, $\varepsilon = \frac{\lambda^2}{1 - \lambda}$ be fixed during the proof.
Let $Y_1B_1C_1$ be an isosceles triangle with Fermat--Torricelli point $T_1$ such that
$|Y_1T_1| = 1$, $|T_1B_1| = |T_1C_1| = \lambda$; then by the cosine rule $|Y_1B_1| = |Y_1C_1| = \sqrt{1+\lambda + \lambda^2}$ and $|B_1C_1| = \sqrt{3}\lambda$. Analogously let $Y_2B_2C_2$ be an isosceles triangle with Fermat--Torricelli point $T_2$ such that
$|Y_2T_2| = 1/4$, $|T_2B_2| = |T_2C_2| = \lambda$; then $|Y_2B_2| = |Y_2C_2| = \sqrt{1/16 + \lambda/4 + \lambda^2}$ and $|B_2C_2| = \sqrt{3}\lambda$.
 
\begin{figure}[h]
    \centering
    \includegraphics[width=\textwidth]{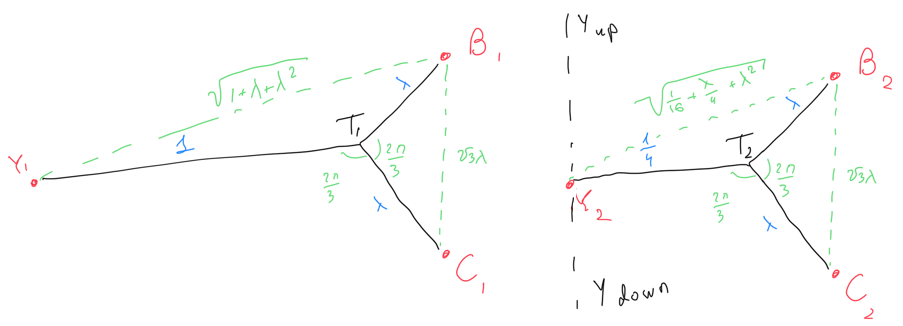}
    \caption{The construction of triangles in lemmas}
    \label{fig:Auxiliary}
\end{figure}
 
Let $b_i \subset B_\varepsilon(B_i)$ and $c_i \subset B_\varepsilon(C_i)$ be symmetric sets with respect to the axis of symmetry $l_i$ of $Y_iB_iC_i$, where $i=1,2$.
Finally let $Y_{up}$, $Y_{down}$ be such points that $Y_{up}Y_{down} \parallel B_2C_2$, $Y_2 \in [Y_{up}Y_{down}]$ and $|Y_2Y_{up}|=|Y_2Y_{down}| = 1/2$.

The following lemma is more-or-less known (see, for instance Lemma A.6 in~\cite{paolini2015example}) so we prove it in Appendix.
Recall that a \textit{regular tripod} is a union of three segments with a common end and pairwise angles equal to $2\pi/3$.

\setcounter{lemma}{-1}

\begin{lemma}
\label{lemma0}
    \begin{itemize}
        \item [(i)] For every $\mathcal{S} \in \mathbb{M}(\{Y_1\} \cup b_1 \cup c_1)$ the set $\mathcal{S} \setminus B_{10\varepsilon}(B_1) \setminus B_{10\varepsilon}(C_1)$ is a regular tripod.
        \item [(ii)]  Every $\mathcal{S} \in \mathbb{M}([Y_{up}Y_{down}] \cup b_2 \cup c_2)$ is a regular tripod outside $B_{10\varepsilon}(B_2) \cup B_{10\varepsilon}(C_2)$.
    \end{itemize}
\end{lemma}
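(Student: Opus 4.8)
The plan is to analyze the local structure of any minimizer and reduce the statement to a rigidity fact about Steiner trees in a triangle with nearly prescribed endpoints. Both parts (i) and (ii) say the same thing in spirit: outside small balls around the two ``lower'' target clusters $B_i,C_i$, the minimizer must coincide with a regular tripod. I would first record the qualitative structure from the properties quoted from~\cite{paolini2013existence}: since $\{Y_1\}\cup b_1\cup c_1$ (resp.\ $[Y_{up}Y_{down}]\cup b_2\cup c_2$) is compact with finitely many connected components, any $\mathcal S\in\mathbb M(\cdot)$ has closure a finite geodesic embedded tree whose leaves lie on the terminal set, with all branching points of degree $3$ and all angles at least $2\pi/3$. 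Thus away from the terminals the tree is a union of straight segments meeting only at triple points with $2\pi/3$ angles.

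\medskip

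\noindent The core step is to show there is exactly one interior branching point and that it sits at the Fermat--Torricelli point $T_i$, so that outside the small balls the tree is the tripod from $T_i$ toward $Y_i$, $B_i$, $C_i$. First I would bound the diameter of the ``essential'' part: because $b_1\subset B_\varepsilon(B_1)$ and $c_1\subset B_\varepsilon(C_1)$ are confined to $\varepsilon$-balls and $\varepsilon=\lambda^2/(1-\lambda)$ is tiny relative to the fixed geometry ($|Y_1T_1|=1$, $|T_1B_1|=|T_1C_1|=\lambda$, with $\lambda<1/300$), the three terminal clusters are well separated and each is connected to the rest of the tree by edges that must enter the respective $\varepsilon$-neighborhoods. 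A length comparison against the explicit tripod on $\{Y_1,B_1,C_1\}$ (whose length is computable from the stated side lengths) gives an a priori upper bound $\H(\mathcal S)$; combined with the triangle inequality this forces any configuration with more than one branching point, or with a branching point displaced from a neighborhood of $T_1$, to be strictly longer. Concretely I would argue that a tree connecting a point near $Y_i$, a point near $B_i$, and a point near $C_i$ with a single admissible triple point is uniquely the Fermat tripod, and that any competitor achieving the minimum cannot afford a second branching vertex because the extra segment it would require costs more than the stability margin afforded by the separation of the clusters.

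\medskip

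\noindent For part (ii) the only change is that the top terminal is the segment $[Y_{up}Y_{down}]$ rather than a point, and $|Y_2T_2|=1/4$; the tree may attach to this segment at a single endpoint (by the property that each connected component of $A$ receives at most one endpoint of $\mathcal S$), and by the $2\pi/3$ angle condition the optimal attachment is again at a point realizing the Fermat geometry of $Y_2B_2C_2$. So the same tripod rigidity applies once one checks that the orthogonal/angle constraints at the segment do not create an alternative optimal topology. In both cases the ball radius $10\varepsilon$ is chosen generously enough that the entire ``messy'' part of the tree (the branching near $B_i,C_i$ caused by the sets $b_i,c_i$ not being single points, together with any wiggle of the incoming edges) is swallowed, leaving a genuine regular tripod on the complement.

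\medskip

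\noindent I expect the main obstacle to be the quantitative length-comparison that rules out a second branching point and pins the unique branching point to within $O(\varepsilon)$ of $T_i$. This is exactly where the constant $\lambda<1/300$ and the definition $\varepsilon=\lambda^2/(1-\lambda)$ must be used sharply: one needs that the ``savings'' from the Fermat angle at $T_i$ dominate the worst-case perturbation of the endpoints over the $\varepsilon$-balls, and that the angle-$2\pi/3$ rigidity near an optimal triple point propagates along the straight edges far enough to reach outside $B_{10\varepsilon}(B_i)\cup B_{10\varepsilon}(C_i)$. Since the excerpt states this lemma is essentially known (cf.\ Lemma A.6 in~\cite{paolini2015example}) and defers it to the Appendix, I anticipate the actual proof is a careful but routine first-variation / trigonometric estimate rather than a conceptual difficulty; the real content is organizing the separation-of-scales so that the local tripod is forced.
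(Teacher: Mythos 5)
Your proposal correctly identifies the target structure (one interior Fermat--Torricelli branching point, tripod outside the balls) and the need for a quantitative length comparison, but it misses the two steps that actually carry the paper's proof, and it starts from a false premise.

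First, the premise. You write that $\{Y_1\}\cup b_1\cup c_1$ has finitely many connected components, so the minimizer is a finite geodesic embedded tree. In the intended application $b_1$ and $c_1$ are the (uncountable, totally disconnected) sets of descendants of $y_2$ and $y_3$; the lemma is stated for arbitrary symmetric subsets of the $\varepsilon$-balls. The structure results from~\cite{paolini2013existence} only give a finite geodesic graph \emph{outside a neighborhood of the terminal set}; near $b_i$ and $c_i$ the tree may have infinitely many branching points. This is exactly why the lemma claims nothing inside $B_{10\varepsilon}(B_i)\cup B_{10\varepsilon}(C_i)$, and why your plan of ``counting branching points'' of a finite tree does not get off the ground.

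Second, the missing mechanism. The essential reduction is to show that $\mathcal S$ crosses \emph{some} circle $\partial B_\rho(B_i)$, $\rho\in[\varepsilon,10\varepsilon]$, in exactly one point (and likewise around $C_i$); only then does the outer part become a Steiner tree for three points (or a point, a segment, and two points), to which tripod rigidity can be applied. The paper gets this from a co-area estimate combined with an explicit competitor: if every such circle were crossed at least twice, the annulus would contain length at least $2\cdot 9\varepsilon$, which can be beaten by inserting the whole circle $\partial B_\varepsilon(B_i)$ plus one radius, of total cost $(2\pi+9)\varepsilon<18\varepsilon$. Your ``stability margin'' and ``extra segment costs more'' remarks do not supply this argument, and no first-variation estimate at a branching point will: the issue is not an extra interior branching vertex but the possibility of two disjoint strands traversing the annulus, which is a topological/measure-theoretic obstruction handled by co-area, not by angle conditions. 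Finally, once the reduction to three points is made, the remaining content is to exclude the degenerate two-segment Steiner tree (which occurs when an angle of the triangle $Y_iB_i'C_i'$ is at least $2\pi/3$); the paper does this by the explicit comparison $1+2\lambda+20\varepsilon$ versus $1+(\tfrac12+\sqrt3)\lambda-30\varepsilon$, which is where $\lambda<1/300$ enters. Your sketch gestures at this comparison but does not identify the two-segment configuration as the competitor to be excluded.
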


\begin{lemma}
    \label{firstlemma}
   There exists $\mathcal{S} \in \mathbb{M}(\{Y_1\} \cup b_1 \cup c_1)$ which is symmetric with respect to $l_1$.
\end{lemma}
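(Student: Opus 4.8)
The plan is to exploit that the data $\{Y_1\}\cup b_1\cup c_1$ is invariant under the reflection $\sigma$ across $l_1$ (since $Y_1\in l_1$ and $\sigma(b_1)=c_1$), so $\sigma$ maps $\mathbb{M}(\{Y_1\}\cup b_1\cup c_1)$ to itself, and then to manufacture a symmetric minimizer out of an arbitrary one.

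First I would fix any $\mathcal{S}\in\mathbb{M}(\{Y_1\}\cup b_1\cup c_1)$ and invoke Lemma~\ref{lemma0}(i) to describe its structure: outside $B_{10\varepsilon}(B_1)\cup B_{10\varepsilon}(C_1)$ the set $\mathcal{S}$ is a regular tripod with some center $T$, one leg ending at $Y_1$ (which lies far from both balls), the other two legs reaching the balls at points $q_B\in\partial B_{10\varepsilon}(B_1)$ and $q_C\in\partial B_{10\varepsilon}(C_1)$. Writing $\mathcal{S}_B,\mathcal{S}_C$ for the parts of $\mathcal{S}$ inside the two balls, of lengths $\ell_B,\ell_C$, minimality forces $T$ to be the Fermat point of $\{Y_1,q_B,q_C\}$, so that
\[
\H(\mathcal{S})=F(q_B,q_C)+\ell_B+\ell_C ,
\]
where $F(P,Q)$ denotes the length of the Fermat tripod of the triple $\{Y_1,P,Q\}$.

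Next I would build two symmetric competitors. For the first, keep $\mathcal{S}_B$, add its mirror image $\sigma(\mathcal{S}_B)$ (which joins $\sigma(q_B)$ to $\sigma(b_1)=c_1$), and reconnect everything to $Y_1$ through the Fermat tripod of the symmetric triple $\{Y_1,q_B,\sigma q_B\}$, whose Fermat point lies on $l_1$; this produces a $\sigma$-symmetric set connecting $\{Y_1\}\cup b_1\cup c_1$, of length $F(q_B,\sigma q_B)+2\ell_B$. The second competitor uses $\mathcal{S}_C$ instead, with length $F(q_C,\sigma q_C)+2\ell_C$. Since $\mathcal{S}$ is a minimizer, it suffices that one competitor be no longer than $\H(\mathcal{S})$; summing the two target inequalities the $\ell$-terms cancel and the whole problem reduces to the single comparison
\[
F(q_B,\sigma q_B)+F(q_C,\sigma q_C)\le 2\,F(q_B,q_C).
\]
Indeed, if this holds then one of the two competitor-length differences is $\le 0$, and any competitor attaining equality is itself a symmetric minimizer.

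The main obstacle is precisely this Fermat-length symmetrization inequality. It is sharp: when $q_C=\sigma q_B$ (the already-symmetric configuration) both sides coincide, so no purely soft/first-order argument yields it, and the naive ``fold the cheaper half across $l_1$'' strategy fails because detaching the $Y_1$-leg disconnects the folded set. I would therefore prove it quantitatively in the relevant regime, where $q_B,q_C$ lie within $10\varepsilon$ of the symmetric points $B_1,C_1$, by expanding $F$ through the closed form $F(P,Q)^2=\tfrac12\bigl(|Y_1P|^2+|Y_1Q|^2+|PQ|^2\bigr)+2\sqrt3\,\mathrm{Area}(Y_1PQ)$ to second order about the symmetric configuration and verifying that the resulting quadratic form has the correct sign; this is where the smallness $\lambda<\tfrac{1}{300}$ (hence $\varepsilon\ll\lambda$) is used to dominate the error terms. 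Once the inequality is in hand, the argument closes at once.
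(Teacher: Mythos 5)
Your reduction is clean and the symmetrization logic is valid as far as it goes: the two competitors are legitimate, their total length minus $2\H(\mathcal{S})$ is exactly $F(q_B,\sigma q_B)+F(q_C,\sigma q_C)-2F(q_B,q_C)$, so if that quantity is $\le 0$ one of the competitors is a symmetric minimizer. But the entire burden of the proof is thereby shifted onto the Fermat-length inequality, which you do not prove, and the method you sketch for it does not close. Writing $g(x,y):=F(x,\sigma y)$ for $x,y$ near $B_1$ (so that $g(x,y)=g(y,x)$ by the reflection symmetry of $F$), your inequality is $g(x,x)+g(y,y)\le 2g(x,y)$, whose second-order content is $\langle B(x-y),x-y\rangle\le 0$ with $B=F_{PQ}\,\sigma$ the mixed Hessian block at the symmetric configuration. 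A direct computation (using $F_{PQ}=-\tfrac{1}{|PT||QT|}\Pi_{P}M^{-1}\Pi_{Q}$ with $M=\sum |XT|^{-1}\Pi_X$) shows this form is negative \emph{semi}definite with a nontrivial kernel: it vanishes whenever $x-y$ is parallel to the direction $T_1B_1$, and indeed for purely radial displacements one has exact equality $g(x,x)+g(y,y)=2g(x,y)$ at all orders. So ``expand to second order and verify the sign of the quadratic form'' cannot settle the inequality: on and near the degenerate directions the sign is decided by third-order terms, which you would have to estimate against a vanishing second-order gain. That analysis is absent, and it is not routine --- the third derivatives of $F$ are of size $\lambda^{-2}$ while the displacements are only $O(\varepsilon)=O(\lambda^2)$, so the competition between the orders is genuinely delicate in the near-radial regime. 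Until the inequality $g(x,x)+g(y,y)\le 2g(x,y)$ is actually established, the proof has a gap at its central step.

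It is worth noting that the paper avoids this inequality entirely. Instead of comparing Fermat lengths of perturbed triples, it inserts two ``gate'' segments $[Z_lZ_r]\subset[DF]$ and $[V_lV_r]\subset[EF]$ lying on the lateral sides of an equilateral triangle $DEF$ whose base contains $Y_1$, and splits any minimizer into a middle part (connecting $Y_1$ to the two gates) and two outer parts. By Viviani's theorem the minimal middle cost is the \emph{same} for every crossing point of the gates --- every minimizer of the middle problem is a regular tripod perpendicular to the gates and automatically symmetric with respect to $l_1$ --- so the outer part on one side can be reflected to the other side at no cost and glued to a suitable middle tripod. This converts the second-order concavity question you face into an exact identity, which is precisely what makes the paper's argument ``soft''. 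If you want to salvage your route, the realistic options are either to prove your inequality via the calibration (dual) representation of $F$ as a maximum of linear functionals (which reduces it to a monotonicity property of the optimal calibration directions), or to adopt a gate construction of the paper's type.
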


\begin{proof}
Let $F$ be a point at the ray $[Y_1T_1)$ such that $|Y_1F| = 1 + \frac{3}{2}\lambda$ and denote by $DEF$ the equilateral triangle such that
$Y_1$ is the middle of the segment $[DE]$ and $B_1C_1$ is parallel to $DE$.
Consider segments $[Z_lZ_r] \subset [DF]$ and $[V_lV_r] \subset [EF]$ such that 
$|Z_lZ_r| = [V_lV_r] = \lambda$ and $Z := DF \cap T_1B_1, V := EF \cap T_1C_1$ are centers of the segments.
Note that $l_1$ is a symmetry axis of $DEF$ and $[Z_lZ_r]$, $[V_lV_r]$ are also symmetric with respect to $l_1$.

By Lemma~\ref{lemma0}(i) every minimal set $\mathcal{S}$ is a regular tripod $Y_1B_1'C_1'$ out of $B_{10\varepsilon}(B_1) \cup B_{10\varepsilon}(C_1)$.
We claim that the tripod $Y_1B_1'C_1'$ intersects segments $[Z_lZ_r]$ and $[V_lV_r]$. 
Indeed, consider Cartesian coordinates in which $Y_1 = (0,0)$, $B_1 = (1 + \lambda/2, \sqrt{3}\lambda/2)$ and $C_1 = (1 + \lambda/2, -\sqrt{3}\lambda/2)$. Then $Z = (1 + 3\lambda/8, 3\sqrt{3}\lambda/8)$, 
$Z_l = (1 + 3\lambda/8 - \sqrt{3}\lambda/4, 3\sqrt{3}\lambda/8 + \lambda/4)$ and $Z_r = (1 + 3\lambda/8 + \sqrt{3}\lambda/4, 3\sqrt{3}\lambda/8 - \lambda/4)$. Since the center $T_1'$ of $Y_1B_1'C_1'$ lies inside triangle $Y_1B_1'C_1'$ it has 
$x$-coordinate smaller than the $x$-coordinate of $B_1'$ and $y$-coordinate smaller than the $y$-coordinate of $B_1'$.

\begin{figure}[h]
    \centering
    \includegraphics[width=0.7\textwidth]{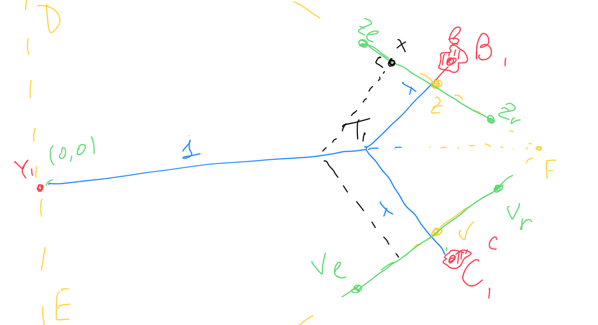}
    \caption{Picture to the proof of Lemma 1}
    \label{fig:lemma1}
\end{figure}

We consider the following auxiliary data for Steiner problem: $A_{mid} = [Z_lZ_r] \cup [V_lV_r] \cup \{Y_1\}$, 
$A_{up} = [Z_lZ_r] \cup b$, $A_{down} = [V_lV_r] \cup c$.
By the results from~\cite{paolini2013existence}, mentioned in the introduction every $\mathbb{M}(A_i)$ is not empty.
Segments $[Z_lZ_r]$ and $[V_lV_r]$ split every $\mathcal{S} \in \mathbb{M}(\{Y_1\} \cup b \cup c)$ on 3 parts which connect $A_{mid}$, $A_{up}$ and $A_{down}$, so
\begin{equation}
\H(\mathcal{S}) \geq \H(\mathcal{S}_{mid}) + \H(\mathcal{S}_{up}) + \H(\mathcal{S}_{down}),
\label{infactequal}
\end{equation}
where $\mathcal{S}_i \in \mathbb{M}(A_i)$. We claim that in fact the equality in~\eqref{infactequal} holds.

It is known (see a barycentric coordinate system) that the sum of distances from a point from a closed equilateral triangle to the sides does not depend on a point.
Thus $\mathbb{M}(A_{mid})$ is a set of regular tripods, and every such tripod is symmetric with respect to $l_1$. Moreover for every point $x \in [Z_lZ_r]$ there is a unique regular tripod $\mathcal{S}_x \in \mathbb{M}(A_{mid})$ and $\mathcal{S}_x$ is orthogonal to $[Z_lZ_r]$ at $x$.

Now consider any $\mathcal{S}_{down} \in \mathbb{M}(A_{down})$. Let $\mathcal{S}_{up}$ be a set, symmetric to $\mathcal{S}_{down}$ with respect to $l_1$; clearly $\mathcal{S}_{up} \in \mathbb{M}(A_{up})$.
For $x \in \mathcal{S}_{down} \cap [V_lV_r]$ the set $\mathcal{S}_x \cup \mathcal{S}_{up} \cup \mathcal{S}_{down}$ connects $\{Y_1\} \cup b \cup c$, and reaches the equality in~\eqref{infactequal}, so
$\mathcal{S}_x \cup \mathcal{S}_{up} \cup \mathcal{S}_{down}$ is a Steiner tree for $\{Y_1\} \cup b \cup c$. By the construction it is symmetric with respect to $l_1$.

\end{proof}

\begin{lemma}
    \label{vert}
    There exists $\mathcal{S} \in \mathbb{M}([Y_{up}Y_{down}] \cup b_2 \cup c_2)$ which is symmetric with respect to $l_2$.
\end{lemma}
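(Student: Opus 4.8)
The plan is to follow the scheme of Lemma~\ref{firstlemma} essentially verbatim, the only structural novelty being that the ``top'' terminal is now the segment $[Y_{up}Y_{down}]$ rather than a single point $Y_1$. First I would invoke Lemma~\ref{lemma0}(ii) to know that every $\mathcal{S}\in\mathbb{M}([Y_{up}Y_{down}]\cup b_2\cup c_2)$ is a regular tripod outside $B_{10\varepsilon}(B_2)\cup B_{10\varepsilon}(C_2)$, with one leg reaching $[Y_{up}Y_{down}]$ and the other two reaching $b_2$ and $c_2$. Since the local geometry near $B_2,C_2$ (namely $|T_2B_2|=|T_2C_2|=\lambda$, $|B_2C_2|=\sqrt3\lambda$ and the $2\pi/3$ opening at $T_2$) is \emph{identical} to that near $B_1,C_1$ in Lemma~\ref{firstlemma}, I would reuse the very same construction of the crossing segments: take $F'$ on the ray $[Y_2T_2)$ with $|T_2F'|=\tfrac32\lambda$, let $D'E'F'$ be the equilateral triangle with $Y_2$ the midpoint of $[D'E']$ and $D'E'\parallel B_2C_2$, and put $[Z_lZ_r]\subset[D'F']$, $[V_lV_r]\subset[E'F']$ of length $\lambda$ centred at the perpendicular feet $Z'=D'F'\cap T_2B_2$, $V'=E'F'\cap T_2C_2$. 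The same coordinate computation as in Lemma~\ref{firstlemma} then shows these segments lie outside the two $10\varepsilon$-balls and are crossed by the two lower legs of the tripod; moreover, since $|Y_2Y_{up}|=|Y_2Y_{down}|=\tfrac12$ while the half-side of $D'E'F'$ equals $\tfrac{1}{\sqrt3}(\tfrac14+\tfrac32\lambda)<\tfrac12$, the whole side $[D'E']$ (and in particular all relevant perpendicular feet near $Y_2$) lies inside $[Y_{up}Y_{down}]$.

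With $[Z_lZ_r]$ and $[V_lV_r]$ in place I would split any $\mathcal{S}$ into three pieces and set $A_{mid}=[Z_lZ_r]\cup[V_lV_r]\cup[Y_{up}Y_{down}]$, $A_{up}=[Z_lZ_r]\cup b_2$, $A_{down}=[V_lV_r]\cup c_2$, obtaining $\H(\mathcal{S})\geq\H(\mathcal{S}_{mid})+\H(\mathcal{S}_{up})+\H(\mathcal{S}_{down})$ for $\mathcal{S}_i\in\mathbb{M}(A_i)$ (nonempty by~\cite{paolini2013existence}), exactly as in~\eqref{infactequal}. Analysing $\mathbb{M}(A_{mid})$ is where the one real difference appears: by Viviani's theorem every regular tripod with apex interior to $D'E'F'$ and legs perpendicular to the three sides has length equal to the height of $D'E'F'$, and such a tripod connects $A_{mid}$ as soon as its three feet fall inside $[Z_lZ_r]$, $[V_lV_r]$ and $[Y_{up}Y_{down}]$; conversely any connected competitor contains a Steiner tree of three points chosen one from each of the three sets, and the perpendicular-foot first-order optimality of such a tree forces its length to be at least this height. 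Hence $\mathbb{M}(A_{mid})$ is precisely this family of perpendicular tripods. The difference from Lemma~\ref{firstlemma} is that the point $Y_1$ is replaced by the long segment $[Y_{up}Y_{down}]$, so the top foot is no longer pinned and the mid-tripod is no longer unique; nevertheless, for each prescribed foot $x\in[V_lV_r]$ there is a symmetric member $\mathcal{S}_x$, obtained by placing the apex at the intersection of $l_2$ with the normal to $E'F'$ through $x$, whose top leg runs along $l_2$ and meets $[Y_{up}Y_{down}]$ orthogonally at the interior point $Y_2$, and whose third foot is the reflection of $x$ in $[Z_lZ_r]$.

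Finally I would assemble the symmetric minimiser exactly as before: pick any $\mathcal{S}_{down}\in\mathbb{M}(A_{down})$, let $\mathcal{S}_{up}$ be its reflection in $l_2$ (which lies in $\mathbb{M}(A_{up})$, the reflection being an isometry carrying $A_{down}$ to $A_{up}$), set $x=\mathcal{S}_{down}\cap[V_lV_r]$, and glue $\mathcal{S}_x\cup\mathcal{S}_{up}\cup\mathcal{S}_{down}$. This set connects $[Y_{up}Y_{down}]\cup b_2\cup c_2$ since the pieces meet at $x$ and at its reflection $x'=\mathcal{S}_x\cap[Z_lZ_r]=\mathcal{S}_{up}\cap[Z_lZ_r]$; its length realises equality in the displayed inequality, so it is a Steiner tree, and it is symmetric with respect to $l_2$ by construction. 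The main obstacle I anticipate is exactly the segment-versus-point issue of the previous paragraph: one must ensure the top leg always meets $[Y_{up}Y_{down}]$ in its interior (guaranteed by the length $1$ of the segment against the $O(\lambda)$-scale of the tripod), and check that passing from a unique to a one-parameter family of mid-tripods does not break the gluing — which it does not, since only the existence of a symmetric $\mathcal{S}_x$ with the prescribed foot $x$ is required.
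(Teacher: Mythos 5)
Your argument is correct in substance, but it takes a genuinely different route from the paper's proof of Lemma~\ref{vert}. You rerun the machinery of Lemma~\ref{firstlemma}: rebuild the equilateral triangle $D'E'F'$ at the new scale, cut along two short segments near the legs $T_2B_2$, $T_2C_2$, bound $\mathbb{M}(A_{mid})$ from below via Viviani (correctly noting that $[Y_{up}Y_{down}]$ lies on the line $D'E'$, so the distance-to-side estimate still applies), and glue the symmetric mid-tripod with a reflected pair $\mathcal{S}_{down}$, $\mathcal{S}_{up}$. You also correctly isolate the one new phenomenon — the top foot is no longer pinned, so $\mathbb{M}(A_{mid})$ is a family rather than a single symmetric tripod — and resolve it by selecting the member with apex on $l_2$. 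The paper instead gives a shorter reflection--averaging argument: by Lemma~\ref{lemma0}(ii) the minimiser is a tripod outside the two balls with its long leg perpendicular to $[Y_{up}Y_{down}]$; assuming it misses $Y_2$, one cuts it by the line $n\parallel B_2C_2$ through $L=l_2\cap\mathcal{S}$ into $\mathcal{S}_Y\cup\mathcal{S}_b\cup\mathcal{S}_c$ and forms two symmetric competitors, $\mathcal{S}_1=[Y_2L]\cup\mathcal{S}_c\cup\mathcal{S}_c'$ of length $\H(\mathcal{S}_Y)-\sqrt3h+2\H(\mathcal{S}_c)$ and $\mathcal{S}_2=\mathcal{T}\cup\mathcal{S}_b\cup\mathcal{S}_b'$ of length $\H(\mathcal{S}_Y)+\sqrt3h+2\H(\mathcal{S}_b)$; since these lengths average exactly to $\H(\mathcal{S})$, both competitors are themselves minimisers and both are symmetric. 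The paper's route exploits the extra slack of the segment terminal (sliding the touching point changes length linearly in $h$, making the averaging exact) and avoids re-verifying the crossing-segment geometry at the scale $1/4$; your route buys uniformity with Lemma~\ref{firstlemma} at the cost of redoing that verification and of the (correctly handled) non-uniqueness of the mid-tripod. One caveat: your phrase ``the same coordinate computation'' glosses over the fact that in this setting the tripod can translate parallel to $[Y_{up}Y_{down}]$ rather than rotate about a fixed apex, so the $O(\varepsilon)$ localisation of $T_2'$ near $T_2$ needs its own (easy) justification before the legs can be said to cross $[Z_lZ_r]$ and $[V_lV_r]$; this is a presentational gap, not a structural one.
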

\begin{proof}
    By Lemma~\ref{lemma0}[(ii)] every Steiner tree $\mathcal{S}$ coincides with a regular tripod outside $B_{10\varepsilon}(B_2) \cup B_{10\varepsilon}(C_2)$. Clearly its longest segment is perpendicular to $Y_{up}Y_{down}$. 
    We want to show that it touches $Y_{up}Y_{down}$ in $Y_2$, id est one of three segments is a subset of $l_2$.
    Assume the contrary, then $l_2 \cap \mathcal{S}$ is a point, denote it by $L$ and let $n \parallel B_2C_2$ be the line containing $L$. Then $n$ divides $\mathcal{S}$ into three connected components, denote them by $\mathcal{S}_Y$, $\mathcal{S}_b$ and $\mathcal{S}_c$, respectively.
    
\begin{figure}[h]
    \centering
    \includegraphics[width=0.5\textwidth]{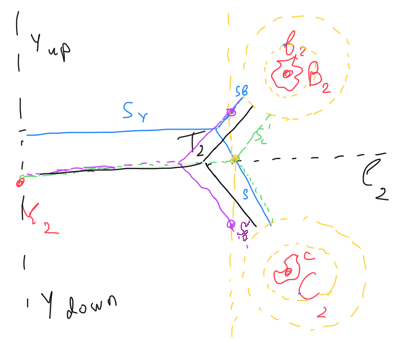}
    \caption{Picture to the proof of Lemma 2}
    \label{fig:lemma2}
\end{figure}

    Without loss of generality $L$ belongs to $\mathcal{S}_c$.
    Let us construct another competitors $\mathcal{S}_1$ and $\mathcal{S}_2$ connecting $[Y_{up}Y_{down}]$, $b$ and $c$. Let $\mathcal{S}_1 = [Y_2L] \cup \mathcal{S}_c \cup \mathcal{S}_c'$ where $\mathcal{S}_c'$ is a reflection of $\mathcal{S}_c$ with respect to $l_2$.
    Put $h:= \dist (Y_2, \mathcal{S}_Y \cap [Y_{up}Y_{down}])$. Thus 
    \[
    \H(\mathcal{S}_1) = \H(\mathcal{S}_Y) -\sqrt 3 h+ 2\H(\mathcal{S}_c). 
    \]
    Let $\mathcal{S}_2:= \mathcal{T} \cup \mathcal{S}_b \cup \mathcal{S}_b'$ where $\mathcal{S}_b'$ is a reflection of $\mathcal{S}_b$ with respect to $l_2$ and $\mathcal{T}$ is a regular tripod connecting $Y_2$ with $n \cap \mathcal{S}_b$ and $n \cap \mathcal{S}_{b'}$. Thus
    \[
    \H(\mathcal{S}_2)= \H(\mathcal{S}_Y) + \sqrt 3 h +2\H(\mathcal{S}_b).
    \]
    Since $\mathcal{S}$ is a Steiner tree, one has $\H(\mathcal{S}) \leq \H(\mathcal{S}_1)$, $\H(\mathcal{S}) \leq \H(\mathcal{S}_2)$ and clearly $\H(\mathcal{S})=\frac{\H(\mathcal{S}_1) + \H(\mathcal{S}_2)}{2}$. Then $\H(\mathcal{S})=\H(\mathcal{S}_1)=\H(\mathcal{S}_2)$, and so $\mathcal{S}_1, \mathcal{S}_2$ belong to $\mathbb{M}([Y_{up}Y_{down}] \cup b \cup c)$.
    As $\mathcal{S}_1$ and $\mathcal{S}_2$ are symmetric with respect to $l_2$ the statement is proved.

\end{proof}

Now we are ready to prove Theorem~\ref{newMainT}, i.e. to show that for $\lambda_j = \lambda < \frac{1}{300}$ 
the set $\Sigma(\Lambda)$ is a Steiner tree for the set of terminals $A_\infty$.

Let $b_1$ and $c_1$ be the subsets of terminals which are descendants of $y_2$ and $y_3$, respectively. 
Since $\varepsilon = \lambda^2 + \lambda^3 + \dots + \lambda^k + \dots$ we have $b_i \subset B_\varepsilon(B_i)$, $c_i \subset B_\varepsilon(C_i)$.
Applying Lemma~\ref{firstlemma} for $Y_1 = y_0$, $B_1 = y_2$, $C_1 = y_3$, $b_1$ and $c_1$ we show that there is a Steiner tree for $A_\infty$ which is symmetric with respect to the line $(y_0y_1)$.

Let $[Z_lZ_r]$ and $[V_lV_r]$ be the segments from the previous application of Lemma~\ref{firstlemma}.
Now define $b_2$ and $c_2$ as descendants of $y_4$ and $y_5$, respectively.
Then applying Lemma~\ref{vert} to $[Y_{up}Y_{down}] = [Z_lZ_r]$, $B_2 = y_4$, $C_2 = y_5$, $b_2$ and $c_2$ (this data is similar to the required with the scale factor $\lambda$) we show that there is a Steiner tree containing $[y_0y_1]$ and branching at $y_1$ (because $y_1$ belongs to the axis of symmetry of $b$ and $c$).

Now since $\lambda_i$ is constant, the upper and lower components of $\Sigma(\Lambda) \setminus [y_0y_1]$ are similar (with the scale factor $\lambda$) to $\Sigma(\Lambda)$.
So the second application of Lemmas~\ref{firstlemma} and~\ref{vert} shows that there is a Steiner tree containing $[y_0y_1] \cup [y_1y_2] \cup [y_1y_3]$.
This procedure recovers $\Sigma(\Lambda)$ step by step; so after $k$-th step we know that the length of every Steiner tree for $A_\infty$ is at least 
\[
\sum_{i=0}^{k-1} (2\lambda)^i.
\]
Thus the length of every Steiner tree for $A_\infty$ is at least the length of $\Sigma(\Lambda)$ which implies $\Sigma(\Lambda) \in \mathbb{M}(A_\infty)$.

\paragraph{Acknowledgements.} The authors are grateful to Fedor Petrov and Pavel Prozorov for checking early versions of the proof.

\bibliographystyle{plain}
\bibliography{main}

\begin{thebibliography}{1}

\bibitem{gilbert1968steiner}
Edgar~N. Gilbert and Henry~O. Pollak.
\newblock Steiner minimal trees.
\newblock {\em SIAM Journal on Applied Mathematics}, 16(1):1--29, 1968.

\bibitem{goldman2017self}
Michael Goldman.
\newblock Self-similar minimizers of a branched transport functional.
\newblock {\em Indiana University Mathematics Journal}, 69(4):1073--1104, 2020.

\bibitem{hwang1992steiner}
Frank~K. Hwang, Dana~S. Richards, and Pawel Winter.
\newblock {\em The {S}teiner tree problem}, volume~53.
\newblock Elsevier, 1992.

\bibitem{marchese2016optimal}
Andrea Marchese and Annalisa Massaccesi.
\newblock An optimal irrigation network with infinitely many branching points.
\newblock {\em ESAIM: Control, Optimisation and Calculus of Variations},
  22(2):543--561, 2016.

\bibitem{morel2010regularity}
Jean-Michel Morel and Filippo Santambrogio.
\newblock The regularity of optimal irrigation patterns.
\newblock {\em Archive for Rational Mechanics and Analysis}, 195(2):499--531,
  2010.

\bibitem{paolini2013existence}
Emanuele Paolini and Eugene Stepanov.
\newblock Existence and regularity results for the {S}teiner problem.
\newblock {\em Calculus of Variations and Partial Differential Equations},
  46(3-4):837--860, 2013.

\bibitem{paolini2015example}
Emanuele Paolini, Eugene Stepanov, and Yana Teplitskaya.
\newblock An example of an infinite {S}teiner tree connecting an uncountable
  set.
\newblock {\em Advances in Calculus of Variations}, 8(3):267--290, 2015.

\end{thebibliography}

\section{Appendix}

\begin{proof}[Proof of Lemma~\ref{lemma0}] In this proof $i \in \{1,2\}$. Suppose that $\mathcal{S}$ intersects every circle $\partial B_\rho(B_i)$ in at least 2 points for $\varepsilon \leq \rho \leq 10\varepsilon$.
Then we may replace $\mathcal{S}$ with a shorter competitor as follows. 
Put $\mathcal{S}_b = \mathcal{S} \cap B_{\varepsilon}(B_i)$.
By the definition and the co-area inequality
\[
\H(\mathcal{S}) \geq \H(\mathcal{S}_b) + 2\cdot 9\varepsilon + \H(\mathcal{S}_i),
\]
where $\mathcal{S}_1 \in \mathbb{M} (\{Y_1\} \cup \partial B_{10\varepsilon}(B_1) \cup c_1)$,
$\mathcal{S}_2 \in \mathbb{M} ([Y_{up}Y_{down}] \cup \partial B_{10\varepsilon}(B_2) \cup c_2)$.
Now take $\mathcal{S}_i \cup \mathcal{S}_b \cup \partial B_{\varepsilon}(B_i) \cup \mathcal{R}_B$, where $\mathcal{R}_B$ is the radius connecting $\mathcal{S}_i$ with $\partial B_{\varepsilon}(B_i)$. 
The length of this competitor is 
\[
\H(\mathcal{S}_b) +  2 \pi \varepsilon  +  9\varepsilon + \H(\mathcal{S}_i),
\]
which gives a contradiction since $2\pi < 9$. The symmetric construction shows that the situation that $\mathcal{S}$ intersects every circle $\partial B_\rho(C_i)$ in at least 2 points for $\varepsilon \leq \rho \leq 10\varepsilon$ is also impossible.

\begin{figure}[h]
    \centering
    \includegraphics[width=\textwidth]{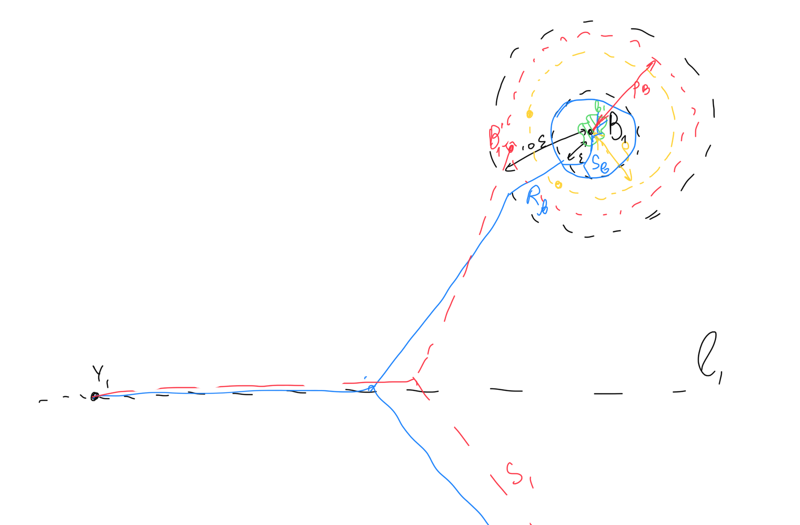}
    \caption{Picture to the proof of Lemma 0}
    \label{fig:Lemma0}
\end{figure}

Thus there are $\rho_b$, $\rho_c \in [\varepsilon,10\varepsilon]$ such that $\mathcal{S} \cap \partial B_{\rho_b}(B_i)$ is a point
$B_i'$ and $\mathcal{S} \cap \partial B_{\rho_c}(C_i)$ is a point $C_i'$.
Clearly, $\mathcal{S} = \mathcal{S}_i \cup \mathcal{S}_b \cup \mathcal{S}_c$, where 
$\mathcal{S}_b = \mathcal{S} \cap B_{\rho_b}(B_i)$, $\mathcal{S}_c = \mathcal{S} \cap B_{\rho_c}(C_i)$ and
$\mathcal{S}_1 \in \mathbb{M} (\{Y_1\} \cup \{B_i'\} \cup \{C_i'\})$,
$\mathcal{S}_2 \in \mathbb{M} ([Y_{up}Y_{down}] \cup \{B_i'\} \cup \{C_i'\})$.
Clearly $\mathcal{S}_i$ is a tripod or the union of 2 segments.
We claim that $\mathcal{S}_i$ is a tripod. By the triangle inequality
\begin{equation}
| \H ([T_iB_i] \cup [T_iC_i]) - \H ([T_iB_i'] \cup [T_iC_i']) |  < 20 \varepsilon.
\label{diff}    
\end{equation}

Now let us prove item (i). By~\eqref{diff} the length of the (non-regular) tripod $[T_1Y_1] \cup [T_1C_1'] \cup [T_1B_1']$ connecting $Y_1,B_1'$ and $C_1'$ is at most 
$1 + 2\lambda + 20\varepsilon$.
By the same reason the length of 2-segments is at least
\[
\sqrt{1 + \lambda + \lambda^2} + \sqrt{3} \lambda - 30\varepsilon > 1 + \left( \frac{1}{2} + \sqrt{3} \right) \lambda - 30 \varepsilon.
\]
Recall that $\varepsilon = \frac{\lambda^2}{1-\lambda}$; it is straightforward to check that
\[
1 + \left( \frac{1}{2} + \sqrt{3} \right) \lambda - 30 \varepsilon > 1 + 2\lambda + 20\varepsilon
\]
for $\lambda < 1/300$. So we show that $\mathcal{S}_1$ contains a tripod connecting $Y_1,B_1'$ and $C_1'$; by the minimality argument it is regular.

Let us deal with item (ii). By~\eqref{diff} the length of the (non-regular) tripod $[T_2Y_2] \cup [T_2C_2'] \cup [T_2B_2']$ connecting $Y_2, B_2'$ and $C_2'$ is at most 
$1/4 + 2\lambda + 20\varepsilon$. Again 2-segment construction has the length at least
\[
1/4 + \lambda/2 + \sqrt{3}\lambda - 30\varepsilon.
\]
The rest calculations coincide with the first item.

\end{proof}

\end{document}